\DeclareMathOperator{\conv}{conv}
\DeclareMathOperator{\sign}{sign}
\DeclareMathOperator{\Mat}{Mat}
\DeclareMathOperator{\Hess}{Hess}
\newtheorem{theorem}{Theorem}
\newtheorem{lemma}{Lemma}
\theoremstyle{definition}
\newtheorem{defin}{Definition}
\author{Aliaksandr Yuran\footnote{The HSE university. \newline 
		The work was supported by the research assistanship grant of the Faculty of mathematics, HSE.}}
\title{Newton Polytopes Of Nondegenerate Quadratic Forms}
\begin{document}
	\maketitle
	\abstract{We characterise Newton polytopes of nondegenerate quadratic forms and Newton polyhedra of Morse singularities.}

\section {Introduction}

Newton polytopes are objects that can be associated to any polynomial or analytical function. They are defined as follows:

\begin{defin}
	Let  $p=\sum_{k\in \mathbb Z^n}  a_k \mathbf z^k$ be a polynomial of $n$ complex variables where $\mathbf z^k=z_1^{k_1} z_2^{k_2} \dots z_n^{k_n}$. The Newton polytope of $p$ is the convex hull $N(p)=\conv\{k \in \mathbb Z ^n \mid a_k \ne 0\} \subset \mathbb R^n$.
	
	If $p=\sum_{k\in \mathbb Z_{+}^n}  a_k \mathbf z^k$ is an analytic function, then the Newton polytope is defined as $\conv\{k+l\in \mathbb R^n \mid k\in \mathbb Z^n, a_k \ne 0, l \in \mathbb R_{\ge 0} ^n\} $. 
	
\end{defin}

\begin{defin}
	Denote the space of all polynomials from $\mathbb{C}[z_1, \dots, z_n]$ whose Newton polytopes lie inside the polytope $M$ as $\mathbb C^M$. 
\end{defin}
The Newton polytope $N(f)$ carries important information about $f$ (a classical result of this type is Kouchnirenko Theorem \cite{MilnorNumber}, see Theorem \ref{Kouchnirenko}). This article is devoted to a natural question: "How do Newton polytopes of nondegenerate quadratic forms / Morse singularities look like?" We prove the following results:
\begin{enumerate}
	\item If $O=\left ( \frac{2}{n},\dots,\frac{2}{n} \right )\in M$, then a generic quadratic form $B\in\mathbb C^M$ is nondegenerate. Otherwise, $B$ is degenerate.
	\item A generic function $f\in \mathbb C^M$ with $f(0)=0$ and $d_0 f=0$ has a Morse singularity at 0 if $O\in M$. Otherwise, the singularity is not Morse.
\end{enumerate}

\textbf{Comment.} %One side of the second theorem (in the assumption that the singularity is isolated and without constructive refinements as in the comment to Theorem \ref{Th2}) also follows from deep results of A. N. Varchenko. Let the complex oscillation index of a germ $f$ be $-(\alpha+1)$ (see \cite[\S 13.1.5]{AVG}) and the remoteness of $N(f)$ be $r$ (see \cite[\S 6.2.3]{AVG}). Then $-(\alpha + 1)  \ge r$ (see \cite[\S 13.1.7, Theorem 13.2]{AVG}; if $r\ge-1$ then $-(\alpha + 1)  = r$). For any Newton polytope of a germ we have $r \ge -\frac{n}{2}$ with the equality attained if and only if $O \in M$. The equality $-(\alpha+1)=-\frac{n}{2}$ is equivalent to nondegeneracy of the singularity (see \cite[Corollary in \S13.3.3]{AVG}).
%Thus, we get $-(\alpha + 1)  \ge r \ge -\frac{n}{2}$. If $f$ has a Morse singularity, then $-(\alpha + 1)=-\frac{n}{2}$, so $r=-\frac{n}{2}$, so $O \in M$.
One direction of the second statement above follows from deep results of A. N. Varchenko on complex oscillating integrals. Namely, if the singularity $f$ is Morse, then the point $O$ belongs to the Newton diagram. The results of Varchenko relate a certain invariant of the singularity $f$ (so called complex oscillation index $I$, see \cite[\S 13.1.5]{AVG}) to the remoteness $R$ of the Newton diagram (i.e.the smallest $r$ such that the point $(-\frac{1}{r},\dots,-\frac{1}{r})$ belongs to the diagram). Since the complex oscillation index attains its minimal possible value $-n/2$ only for the Morse singularity (see \cite[Corollary in \S13.3.3]{AVG}), and the remoteness takes its smallest possible value only for Newton diagrams containing $O$, the inequality $I \ge R$ (see \cite[\S 13.1.7, Theorem 13.2]{AVG}) implies the sought result. Our proof of this implication is much more elementary and constructive (see the comment to Theorem \ref{Th2} below).

%is context it is a natural problem to describe the Newton polytopes of Milnor singularities (the functions $f$ with nonzero Hessian or, equivalently, with $\mu(f)=1$). This is equivalent (as we will see) to the description of Newton polytopes of nondegenerate quadratic forms.  For instance, this problem arose and was solved in dimension 4 in \cite{alex2013monodromy}. 

\textbf{Comment}. This problem arose, in particular, in connection with the monodromy conjecture, and was solved in the dimensions up to 4 by technical casework  in \cite[Lemma 4.9]{alex2013monodromy}.

	\textbf{Structure of the paper.} We characterise Newton polytopes of quadratic forms in Theorem \ref{Th1} (section 3.1), Theorem \ref{Th2} (sections 3.2--3.3) and the ones of singularities in Theorem \ref{Th3} (section 4). 
	
	The proof of the Theorem \ref{Th2} consists of 2 steps. The first step is the construction of a zigzag of nonzero entries in the matrices from $\mathbb C^M$; the second one is the proof that the existence of such a zigzag is enough for a generic quadratic form to have nonzero determinant. We prove both steps in the Section 3.2. Another way to make the step 1 is described in the Section 3.3.
	
	\textbf{Acknowledgement.} This article was written under the scientific supervision of Alexander Esterov.

\section{Notation}
\begin{defin}
	Let $A_i$ (where $i\in \{1, 2, \dots,n\}$) be the point in $\mathbb R^n$ whose $i$-th coordinate is 2 while the others are 0. We denote the convex hull of all the points $A_i$ by $2\Delta$.
		
	Then any of the integer points in $2\Delta$ can be expressed as $A_{ij}=A_{ji}=\frac{A_i+A_j}{2}$. %[It is easy to see that $A_{ij}=(\delta_1^i+\delta_1^j,\dots, \delta_n^i+\delta_n^j)$.] 
	
	Let $O=\left ( \frac{2}{n},\dots,\frac{2}{n} \right )$ be the barycenter of $2\Delta$.
\end{defin}

\newcommand {\tB}{\tilde B}

\begin{defin} 
	Suppose that we have a lattice polytope $M \subset 2\Delta$. Then define stencil as the matrix $\tilde {B}(M) \in \Mat_n(\{0,1\})$ the entries of which are $\tilde {B}_{ij}=1$ if and only if $A_{ij} \in M$.
	
\end{defin}
	
	\textbf{Comment.} It is easy to see that the stencil $\tilde {B}(M)$ encodes the polytope $M$. For $M\subset 2\Delta$ the space $\mathbb C^M$ consists exactly of the quadratic forms $B$ for which the matrix entries $B_{ij}$ equal 0 if $\tB(M)_{ij}$ equal 0.

	 %[$\mathbb C^M=\{ B \in \Mat_n(\mathbb{C}) \mid B_{ij}=0 \text{, если } \tilde B_{ij}(M)=0; \; B_{ij}=B_{ji} \}$] hhhhh
	
\section{Main theorems} \label{MainSection}

 \subsection{Newton polytopes of nondegenerate quadratic forms}

\begin{theorem} \label{Th1}
	If $B$ is a nondegenerate quadratic form, then $O\in N(B)$.
\end{theorem}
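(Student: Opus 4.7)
My plan is to argue by contrapositive: assuming $O\notin N(B)$, I will produce a permutation obstruction that forces $\det B=0$. The core observation is a very clean pairing formula: for any $c=(c_1,\ldots,c_n)\in\mathbb R^n$, one has $\langle c, A_{ij}\rangle = c_i+c_j$ (both for $i\neq j$, where $A_{ij}$ has ones in positions $i,j$, and for $i=j$, where $A_{ii}=A_i$ has a two in position $i$), while $\langle c, O\rangle = \frac{2}{n}\sum_k c_k$. This symmetric bilinear structure of the exponents in $2\Delta$ is what links Newton polytopes to the combinatorics of the determinant expansion.

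First, I would apply the separating hyperplane theorem to the closed convex polytope $N(B)$ and the point $O\notin N(B)$: there exist $c\in\mathbb R^n$ and $\alpha\in\mathbb R$ with $\langle c,O\rangle<\alpha$ and $\langle c, A_{ij}\rangle\geq\alpha$ for every vertex $A_{ij}$ of $N(B)$. Translating these via the formulas above, the separation reads
\[
    c_i+c_j\geq\alpha\ \text{whenever}\ B_{ij}\neq 0,\qquad \text{and}\qquad \sum_{k=1}^n c_k<\tfrac{n\alpha}{2}.
\]

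Second, I would expand $\det B=\sum_{\sigma\in S_n}\sign(\sigma)\prod_{i} B_{i,\sigma(i)}$. If $\det B\neq 0$, at least one term is nonzero, so some $\sigma\in S_n$ satisfies $B_{i,\sigma(i)}\neq 0$ for every $i$. Adding the inequalities $c_i+c_{\sigma(i)}\geq\alpha$ over $i=1,\ldots,n$ yields $2\sum_k c_k\geq n\alpha$, i.e. $\sum_k c_k\geq n\alpha/2$, directly contradicting the strict separation inequality. Hence no such $\sigma$ exists and $B$ is degenerate, proving the contrapositive.

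I do not expect any serious obstacle here: once the pairing formula $\langle c,A_{ij}\rangle=c_i+c_j$ is written down, the proof is a one-line matching between separation of $O$ and a perfect matching in the support of $B$ (a bipartite matching between rows and columns where $B_{ij}\neq 0$). The only point deserving care is the uniform treatment of the diagonal case $i=j$, which is precisely what makes the permutation sum collapse to $2\sum_k c_k$. As a side remark, the symmetry of $B$ plays no role in this argument, so the same reasoning proves the analogous statement for arbitrary (not necessarily symmetric) matrices whose support convex hull avoids $O$.
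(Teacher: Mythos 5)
Your argument is correct and is essentially the paper's proof run in contrapositive form: both rest on extracting from a nonzero term of $\det B$ a permutation $\sigma$ with $B_{i,\sigma(i)}\neq 0$ for all $i$, together with the identity $\frac{1}{n}\sum_{i=1}^n A_{i,\sigma(i)}=O$ (your summation of $c_i+c_{\sigma(i)}\geq\alpha$ is exactly this identity evaluated against the separating functional). The paper simply exhibits this convex combination directly, which makes the separating-hyperplane step unnecessary (though harmless), and your remark that the symmetry of $B$ is irrelevant applies equally to the direct version.
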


\begin{proof}
	
	Remember that if $B$ is nondegenerate, then \[\det B=\sum_{\sigma \in S_n} \sign(\sigma) B_{1,\sigma(1)} \dots B_{2,\sigma(2)}B_{n,\sigma(n)} \neq 0.\]
	
	Thus,  $B_{1,\sigma(1)} \dots B_{2,\sigma(2)}B_{n,\sigma(n)} \neq 0$ for some $\sigma \in S_n$. Consequently, each of the points $A_{i,\sigma(i)}$ belongs to $N(M)$. Now we can express $O$ as a convex combination of the points from $N(M)$ in the following way: 
	%\[ \sum_{i=1}^n \frac{1}{n} \cdot A_{i,\sigma(i)} = 
	%\frac{1}{n} \sum_{i=1}^n (\delta_1^{i}+\delta_1^{\sigma (i)}, \delta_2^{i}+\delta_2^{\sigma (i)}, \dots, \delta_n^{i}+\delta_n^{\sigma (i)} ) =	\] 
	%\[\frac{1}{n} (2,2,\dots,2)=O\text{, which concludes the proof of the theorem.} \] 
	\[ \sum_{i=1}^n \frac{1}{n} \cdot A_{i,\sigma(i)} = \frac{1}{2n} \sum_{i=1}^n A_i + A_{\sigma(i)}= \frac{1}{n} \sum_{i=1}^n A_i=O.\]

\end{proof}

\subsection{Quadratic forms with given Newton polytopes}

%\begin{theorem}
%	Если некоторый целочисленный многогранник $M \subset 2\Delta$ и $O\in M$, 
	
%\end{theorem}

\begin{theorem} \label{Th2}
	Suppose that a lattice polytope $M \subset 2\Delta$ contains $O$; then a generic quadratic form $B\in \mathbb C^M$  (i.e. $B$ for which $N(B)\subset M$) is nondegenerate.
\end{theorem}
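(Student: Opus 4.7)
My plan is to reduce nondegeneracy of the generic $B\in\mathbb C^M$ to finding a single permutation $\sigma\in S_n$ whose graph lies inside the stencil, i.e.\ $\tilde B(M)_{i,\sigma(i)}=1$ for every $i$. Such a $\sigma$---the ``zigzag'' of nonzero entries announced in the introduction---then forces $\det B\not\equiv 0$ as a polynomial in the independent entries of $B$, from which generic nondegeneracy immediately follows.

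\emph{Step 1: from $O\in M$ to a permutation in the stencil.} Since $O$ lies in the lattice polytope $M$, I expand it as a convex combination of lattice points of $M$: $O=\sum_{\{i,j\}}c_{\{i,j\}}A_{ij}$ with $c_{\{i,j\}}\ge 0$, $\sum c_{\{i,j\}}=1$, and $c_{\{i,j\}}>0$ only when $A_{ij}\in M$. Reading off the $k$-th coordinate gives $\tfrac{2}{n}=2c_{\{k,k\}}+\sum_{j\ne k}c_{\{k,j\}}$, so the symmetric matrix $W$ with $W_{kk}=2c_{\{k,k\}}$ and $W_{kj}=c_{\{k,j\}}$ for $k\ne j$ is a nonnegative matrix with every row sum equal to $2/n$, supported inside the stencil $\tilde B(M)$. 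After rescaling, $(n/2)W$ is doubly stochastic with the same support; the bipartite graph whose edges are the pairs $(i,j)$ with $\tilde B(M)_{ij}=1$ therefore carries a fractional perfect matching, and Hall's marriage theorem (equivalently, one application of Birkhoff--von Neumann) produces an integer perfect matching: the desired $\sigma$.

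\emph{Step 2: one zigzag suffices.} For $\sigma$ as above, $m_\sigma:=\prod_i B_{i,\sigma(i)}$ is a monomial in the independent entries $B_{ij}$, $i\le j$ (using $B_{i,\sigma(i)}=B_{\sigma(i),i}$ when $i>\sigma(i)$). I claim its coefficient in $\det B$ is nonzero. Two permutations $\tau,\sigma$ contribute the same monomial iff they induce the same multiset of unordered pairs $\{\{i,\tau(i)\}\}_i=\{\{i,\sigma(i)\}\}_i$; a short case analysis on loops, doubled edges, and simple cycles shows that any such $\tau$ must share the fixed points and $2$-cycles of $\sigma$ and can only differ by reversing the orientation of some subset of its cycles of length $\ge 3$. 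Each such reversal preserves the cycle type and hence the sign, so the coefficient of $m_\sigma$ in $\det B$ equals $\sign(\sigma)\cdot 2^c$, where $c$ is the number of cycles of $\sigma$ of length $\ge 3$; in particular this coefficient is nonzero.

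\emph{Where the difficulty lies.} The substantive step is Step 1: translating the convex-geometric condition $O\in M$ into the combinatorial statement that the stencil, viewed as a bipartite graph, satisfies Hall's condition. The symmetric doubly-stochastic detour is the cleanest route I see, although the paper's mention of an alternative construction in Section~3.3 suggests that a more direct combinatorial extraction of the zigzag exists and would avoid invoking Birkhoff--von Neumann. Step~2 is a routine, if slightly careful, sign-and-cancellation argument.
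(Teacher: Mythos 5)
Your proposal is correct, and Step 2 (the sign/cancellation analysis showing the coefficient of $\prod_i B_{i,\sigma(i)}$ in $\det B$ equals $\sign(\sigma)\cdot 2^{c}\neq 0$) is essentially the argument the paper gives, just phrased via cycle reversals rather than via the factorization $\sigma_1=\sigma_0\sigma_2^2$. Where you genuinely diverge is Step 1. The paper offers two routes: the first passes to a \emph{minimal} polytope containing $O$, proves it is a simplex with at most $n$ lattice points, and extracts the permutation by induction on ``special'' vertices (this yields an explicit zigzag); the second verifies K\"onig's condition in contrapositive form, showing that a covering pair $I,J$ with $|I|+|J|<n$ produces the separating halfspace $\sum_{l\in I}x_l+\sum_{l\in J}x_l\ge 2$ containing $M$ but not $O$. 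You instead read the barycentric coordinates of $O$ directly as a symmetric nonnegative matrix with row sums $2/n$ supported in the stencil, rescale to a doubly stochastic matrix, and invoke the standard fact that the support of a doubly stochastic matrix contains a permutation (Birkhoff--von Neumann, or Hall/K\"onig plus the mass-counting argument). This is the ``primal'' side of the same matching duality the paper exploits in its K\"onig-based proof: your construction turns $O\in M$ into a fractional perfect matching and rounds it, while the paper turns the failure of a perfect matching into a hyperplane separating $O$ from $M$. Your route is arguably the quickest way to get Step 1, at the cost of being less constructive than the paper's first proof (no explicit zigzag or separating hyperplane is produced); all three arguments use comparable combinatorial machinery, and your argument is complete as stated.
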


\textbf{Comment.} Our first proof provides a way to find the zigzag of nonzero entries in $B$ if $O\in N(B)$. Our second proof provides a way to construct a hyperplane separating $O$ from $N(B)$ given a large enough rectangle of zeroes in the matrix $B$ (upon a permutation of rows and columns).

We begin the proof of the theoren \ref{Th2} with the following

	\begin{defin}
		A lattice polytope $M\subset 2\Delta$ is called minimal if it contains $O$ and every lattice polytope $M' \subset M$ containing $O$ is equal to $M$.
	\end{defin}

	Now we prove several lemmata. $M$ is a minimal polytope and $\{C_1, C_2, \dots, C_k\}$ is the set of all its lattice points in the first four of them.  
	\begin{lemma}
		None of the stencil rows $\tB(M)$ consists of zeroes.
	\end{lemma}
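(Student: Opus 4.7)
The plan is to argue by contradiction using the simple observation that a row of zeroes in $\tB(M)$ forces $M$ to lie in a coordinate hyperplane that misses $O$. No appeal to minimality is needed for this statement; it uses only that $M$ is a lattice polytope containing $O$.

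First I would recall the geometry of the lattice points of $2\Delta$: every lattice point has the form $A_{kl}=(A_k+A_l)/2$, and a direct calculation shows that the $i$-th coordinate of $A_{kl}$ equals $2$ if $k=l=i$, equals $1$ if exactly one of $k,l$ equals $i$, and equals $0$ otherwise. Consequently, the lattice points of $2\Delta$ whose $i$-th coordinate is nonzero are precisely those of the form $A_{ij}$ for some $j\in\{1,\dots,n\}$.

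Next, suppose for contradiction that the $i$-th row of $\tB(M)$ consists of zeroes. By definition of the stencil this means that $A_{ij}\notin M$ for every $j$, so by the previous paragraph every lattice point of $M$ has vanishing $i$-th coordinate. Since $M$ is a lattice polytope, it equals the convex hull of its lattice vertices, and all those vertices lie in the hyperplane $\{x_i=0\}$; therefore $M\subset\{x_i=0\}$.

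Finally, the barycenter $O=(2/n,\dots,2/n)$ has $i$-th coordinate $2/n>0$, so $O\notin\{x_i=0\}$, contradicting the assumption $O\in M$. The only step that deserves a second thought is whether a lattice polytope really coincides with the convex hull of the lattice points it contains; this is immediate from the fact that the vertices of a lattice polytope are themselves such lattice points. So there is no real obstacle, and the lemma reduces to the one-line observation above.
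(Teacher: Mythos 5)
Your proof is correct and is essentially the same as the paper's one-line argument: a zero row forces every lattice point (hence every point) of $M$ to have vanishing $i$-th coordinate, while $(O)_i=\frac{2}{n}\neq 0$, contradicting $O\in M$. You are also right that only $O\in M$ is used, not minimality.
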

	
		\begin{proof}
			If the $i$-th row consists of zeroes, then the $i$-th coordinate of any point of $M$ is zero, while the $i$-th coordinate of $O$ is not.
		\end{proof}
	
	\begin{lemma}	
		The polytope $M$ contains at most one of the points $ A_1, A_2, \dots, A_n $. In other words, there is at most one entry on the diagonal of the stencil $\tB(M)$ that equals 1.
	\end{lemma}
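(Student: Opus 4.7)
I would argue by contradiction: assume $A_i, A_j \in M$ for some $i \neq j$ and construct a proper lattice sub-polytope of $M$ still containing $O$, contradicting the minimality of $M$. The first, almost free, observation is that since $M$ is convex and $A_{ij} = (A_i + A_j)/2$ is a lattice point, the hypothesis $A_i, A_j \in M$ forces $A_{ij} \in M$ as well, so we have a third lattice point of $M$ collinear with $A_i$ and $A_j$.

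The core of the argument is the identity $A_i + A_j = 2 A_{ij}$, which lets us shift weight between $A_i$ and $A_j$ through $A_{ij}$ while preserving total mass. Write $O$ as any convex combination $O = \sum_l \mu_l C_l$ of the lattice points of $M$ and let $\mu_0, \mu_1$ be the coefficients of $A_i, A_j$. Using
\[
\mu_0 A_i + \mu_1 A_j = \begin{cases} (\mu_0 - \mu_1) A_i + 2\mu_1 A_{ij}, & \mu_0 \ge \mu_1, \\ 2\mu_0 A_{ij} + (\mu_1 - \mu_0) A_j, & \mu_0 \le \mu_1, \end{cases}
\]
one checks that the new coefficients are non-negative and sum to $\mu_0+\mu_1$. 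Substituting back into the expression for $O$ (and adding the extra weight to the existing coefficient of $A_{ij}$) yields a convex combination of lattice points of $M$ equal to $O$ in which one of $A_i, A_j$ has coefficient zero.

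Finally, since $A_i$ and $A_j$ are vertices of the simplex $2\Delta$, they are vertices of every lattice polytope contained in $2\Delta$ that contains them; deleting such a vertex from the lattice-point list strictly shrinks the convex hull. Hence the convex hull $M'$ of the reduced lattice-point set is a lattice polytope with $M' \subsetneq M$ and $O \in M'$, contradicting the minimality of $M$. There is no real obstacle here beyond noticing the midpoint trick, which is essentially forced by the notation $A_{ij}$.
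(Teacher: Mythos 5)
Your argument is correct and is essentially the paper's own proof: you use the same identity $A_i+A_j=2A_{ij}$ to rewrite the convex combination for $O$ so that one of $A_i,A_j$ drops out, and then invoke minimality of $M$. Your extra remark that $A_i$, being a vertex of $2\Delta$, is a vertex of $M$, so the reduced hull is strictly smaller, just makes explicit a point the paper leaves implicit.
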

		\begin{proof}
			 Let $C_1=A_1$ and $C_2=A_2$. Since $O\in M$ we can express this point as a convex combination of $\{C_i\}$ in the following way: $O=\alpha_1 A_1+ \alpha_2 A_2 + \sum_{i=3}^k \alpha_i C_i$, where $\alpha_1+...+\alpha_k=1$. 
			By symmetry, let $\alpha_1\le\alpha_2$. 
			But then since $A_1+A_2=2A_{1,2}$ we have $O=2\alpha_1A_{1,2} + (\alpha_2-\alpha_1) A_2 + \sum_{i=3}^k \alpha_i C_i$, so we obtain a polytope $M'=\conv \{A_{1,2}, A_2; C_3, \dots, C_k\} \subsetneq M$ that contains $O$.
		\end{proof}
	
	\begin{lemma}
		$M$  is a simplex (possibly not of maximal dimension) that contains at most $n$ lattice points. Thus, $\tB(M)$ has at most $2n$ nonzero entries.
	\end{lemma}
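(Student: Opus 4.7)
The plan is to prove three claims in sequence: that $O$ lies strictly in the relative interior of $M$ when written as a convex combination of its lattice points, that these lattice points are affinely independent (so $M$ is a simplex), and that there are at most $n$ of them. Then a short count finishes the bound on $\tilde B(M)$.

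First I would use minimality to deduce that in any representation $O = \sum_{i=1}^k \alpha_i C_i$ as a convex combination of the lattice points of $M$, every coefficient $\alpha_i$ is strictly positive. Otherwise, dropping any $C_i$ with $\alpha_i = 0$ would give a proper sublattice polytope still containing $O$, contradicting minimality.

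Next I would show the $C_i$ are affinely independent. Suppose not: then there is a nontrivial relation $\sum \beta_i C_i = 0$ with $\sum \beta_i = 0$. Since the $\beta_i$ are not all zero but sum to zero, some $\beta_i > 0$. Taking $t = \min_{\beta_i > 0} \alpha_i / \beta_i > 0$, the perturbed combination $\sum (\alpha_i - t\beta_i) C_i$ still equals $O$, its coefficients still sum to $1$, they are all non-negative, and at least one vanishes. Hence $O$ is a convex combination of at most $k-1$ of the $C_i$, again contradicting minimality. Consequently $M = \conv(C_1, \ldots, C_k)$ is a simplex, and because $M \subset 2\Delta$ lies in an $(n-1)$-dimensional affine subspace, we get $k \le n$.

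Finally I would count: each off-diagonal lattice point $A_{ij}$ with $i \neq j$ contributes the two nonzero entries $\tilde B_{ij} = \tilde B_{ji} = 1$ to the stencil, while each diagonal point $A_i = A_{ii}$ contributes one. By the previous lemma at most one lattice point of $M$ is diagonal, so the total number of nonzero entries is at most $2(k-1) + 1 \le 2n$ in the presence of a diagonal point, and at most $2k \le 2n$ otherwise.

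The one step that needs real care is the perturbation argument: verifying that the chosen $t$ drives at least one coefficient to zero while keeping all others non-negative. The subtlety is ensuring that $\min_{\beta_i > 0} \alpha_i/\beta_i$ is actually attained, which follows from the fact that $\sum \beta_i = 0$ together with nontriviality forces some $\beta_i$ to be strictly positive. Everything else is either a direct consequence of minimality or a dimension count inside $2\Delta$.
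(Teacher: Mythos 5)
Your overall route---re-deriving Carath\'eodory's theorem by hand via a perturbation argument and combining it with minimality---is close in spirit to the paper's proof, which simply invokes Carath\'eodory. However, there is a genuine gap at both places where you appeal to minimality. You argue that if some coefficient $\alpha_i$ vanishes (in the initial representation of $O$, or after perturbing along an affine relation), then dropping $C_i$ yields a \emph{proper} lattice subpolytope of $M$ still containing $O$, contradicting minimality. But dropping $C_i$ actually shrinks the polytope only when $C_i$ does not lie in the convex hull of the remaining lattice points, i.e.\ only when $C_i$ is a vertex of $M$. If some lattice point of $M$ were a non-vertex, removing it would leave the convex hull unchanged, minimality would not be violated, and your affine-independence argument would stall. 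This matters because the lemma (and Step 1 of Theorem \ref{Th2}, via the count of nonzero stencil entries) needs a bound on \emph{all} lattice points of $M$, not merely on the vertices of some simplex through $O$; so you cannot simply pass to an affinely independent subset and stop there.

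The missing piece is exactly the closing paragraph of the paper's proof: one must check that every lattice point of a minimal $M$ is a vertex of $M$. This follows from the previous lemma: the only nontrivial way to write $A_{ij}$ (with $i\ne j$) as a convex combination of lattice points of $2\Delta$ uses $A_i$ and $A_j$ with positive weights, so if $A_{ij}$ lay in the convex hull of the other lattice points of $M$ we would get $A_i, A_j \in M$, contradicting the fact that $M$ contains at most one of $A_1,\dots,A_n$; and a diagonal point $A_i$ is always a vertex since its $i$-th coordinate equals $2$, which no convex combination of the other lattice points can achieve. Once this observation is inserted, every lattice point with vanishing coefficient really is a vertex, deleting it does produce a strictly smaller lattice polytope containing $O$, and your argument goes through; your final count of at most $2(k-1)+1$ or $2k$ nonzero entries is correct as written.
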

	\begin{proof}
		
		The statement easily follows from Carath\'eodory theorem, which states that if the convex hull of a set $K\subset \mathbb R^{n-1} $ contains a point $P$, then there is a subset $K' \subset K$ with cardinality no more than $n$ such that $\conv K'$ is a simplex containing $P$.
		
		Remember that $M$ lies in the $(n-1)$-dimensional subspace of $\mathbb R^n$ containing all $A_i$-s. Replacing $K$ by $M\cap \mathbb Z^n$ and $P$ by $O$ we obtain a containing the point $O$ polytope $M'\subset M$ with at most $n$ vertices. Now $M'=M$ since $M$ is minimal.
		
		The only remaining issue is to verify that all the lattice points of $M$ are its vertices. There are only two possible representations of $A_{ij}$ as a convex combination of lattice points from $2\Delta$, namely $A_{ij}=1\cdot A_{ij}$ and $A_{ij}=\frac{1}{2}A_i+\frac{1}{2}A_j$. Thus, if $A_{ij} \in M$, then either $A_{ij}$ is a vertex of $M$ or both $A_i$ and $A_j$ are vertices of $M$. The second variant contradicts the Lemma 2, so the conclusion follows.

	\end{proof}

\begin{defin}
	%We call the vertex $A_{ij}$ of a polytope $M \subset 2 \Delta$ special if $\tB(M)_{ij}$ is the only nonzero element in the $i$-th row of $\tB$.  	
	We call the vertex $A_{ij}$ of a polytope $M \subset 2 \Delta$ special if $A_{il}\notin M$ for any $l \ne j$. 
	
	In fact, since $A_{ij}=A_{ji}$ we should verify that special vertices are defined properly. This is done by the following   
\end{defin}	

	\begin{lemma}
		Suppose $A_{ij}$ is a special vertex of $M$; then $A_{lj}\notin M$ for any $l \ne j$.   
		%Suppose that $\tilde B_{ij} =1 $ is the only nonzero element of the $i$-th row of the stencil $\tilde B(M)$. Then it is the only nonzero element of the $j$-th column as well. In other words, if $A_{ij}$ is a special vertex, then $A_ij$ is the only lattice point of $M$?=. 
	\end{lemma}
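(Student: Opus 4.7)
The plan is to combine two facts: first, that in a simplex every point has unique barycentric coordinates; and second, that specialness of $A_{ij}$ pins down essentially all of these coordinates for $O$. The minimality of $M$ will then deliver the contradiction.

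By Lemma 3, $M$ is a simplex whose vertices are exactly its lattice points, so $O$ has a unique expression as a nonnegative convex combination $O=\sum_s \alpha_s C_s$ of those lattice points. I would first observe that $A_{ij}$ is the only lattice point of $M$ whose $i$-th coordinate is nonzero: a lattice point of $2\Delta$ with nonzero $i$-th coordinate must, up to the symmetry $A_{pq}=A_{qp}$, have the form $A_{iq}$, and the defining condition $A_{il}\notin M$ for $l\neq j$ then forces $q=j$. As a byproduct, applying the definition with $l=i$ in the case $i\neq j$ also rules out $A_i=A_{ii}\in M$.

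The case $i=j$ is trivial, since then the hypothesis and conclusion of the lemma coincide via the symmetry $A_{lj}=A_{li}=A_{il}$; so I would assume $i\neq j$. Reading off the $i$-th coordinate of $O=(2/n,\dots,2/n)$, the previous paragraph shows that only $A_{ij}$ contributes, which pins its barycentric weight to $\alpha_{ij}=2/n$. Plugging this into the $j$-th coordinate equation shows that $A_{ij}$ alone already supplies the required $2/n$, and since all the remaining contributions $\alpha_s(C_s)_j$ are nonnegative, every other vertex of $M$ with positive $j$-th coordinate must carry barycentric weight zero.

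Now I would close the argument by contradiction: if some $A_{lj}\in M$ with $l\neq i$, then $A_{lj}$ is a vertex of $M$ (by Lemma 3) distinct from $A_{ij}$, its $j$-th coordinate is at least $1$, and so by the previous paragraph its barycentric weight vanishes. Consequently $O$ lies in the convex hull of the remaining lattice points of $M$, a strictly smaller lattice sub-polytope (strictness is automatic because $M$ is a simplex and hence each vertex is extreme), contradicting the minimality of $M$. I do not anticipate any real obstacle here beyond tracking the degenerate case $i=j$ and carefully invoking Lemma 3 for both the simplex structure and the coincidence of lattice points with vertices.
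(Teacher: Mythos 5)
Your proof is correct and follows essentially the same route as the paper: read off the $i$-th coordinate of a convex representation of $O$ to pin the weight of $A_{ij}$ at $\frac{2}{n}$, then use the $j$-th coordinate and nonnegativity to kill the weights of all other points $A_{lj}$, and conclude by minimality of $M$ (note that, as in the paper's own proof, what is actually shown is $A_{lj}\notin M$ for $l\neq i$, the statement's ``$l\neq j$'' being a typo). Your extra appeals to Lemma 3 (uniqueness of barycentric coordinates, lattice points being vertices) are harmless refinements of the same argument.
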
 

	\begin{proof}
		If $i=j$ there is nothing to prove since $A_{ij}=A_{ji}$.
			
		Now assume $i \ne j$. Since $O\in M$, we can express it as a convex combination of vertices of $M$ (and possibly $A_{lj}$ for several $l$):
		\[O=\sum_{l = 1}^n \alpha_l A_{lj} +\sum_{l=n+1}^{k'} \alpha_l C_l\]
			
		Here $C_l$ are the vertices of $M$ the $i$-th and $j$-th coordinates of that are 0. 
			
		Let us look at the $i$-th coordinate of $O$:
			
		\[\frac{2}{n}=(O)_{i} =\alpha_i \cdot (A_{ij})_i+\sum_{l \in 1\dots n, l \ne i} \alpha_l \cdot (A_{lj})_i +\sum_{l=n+1}^{k'} \alpha_l \cdot (C_l)_i=\alpha_i\cdot 1\]
			
		Now let us look at the $j$-th one:
			
		\[
		\frac{2}{n}= (O)_{j} =%%\alpha_i \cdot (A_{ij})_j+\sum_{l \in 1\dots n, l \ne i} \alpha_l \cdot (A_{lj})_j +\sum_{l=n+1}^{k'} \alpha_l \cdot (C_l)_j =\\
		(\alpha_i\cdot 1)
		+(\alpha_j+\sum_{l \in 1\dots n, l \ne i} \alpha_l\cdot 1)
		+(0)
		=\frac{2}{n}+(\alpha_j+\sum_{l \in 1\dots n, l \ne i} \alpha_l)
		\]
			
		So, we obtain that %%$(\alpha_j+\sum_{l \in 1\dots n, l \ne i} \alpha_l)=0$,
		for each $l \in 1\dots n, l \ne i$ we have $ \alpha_l=0 $ using the nonnegativity of $ \alpha_l $.   
		And now 	\[O=\alpha_i A_{ij} +\sum_{l=n+1}^{k'} \alpha_l C_l\]
			
		Thus, using the minimality of $M$, we obtain $A_{lj} \notin M$ for any $l \ne i$. This is equivalent to the Lemma's statement.

	\end{proof}

\begin{lemma} \label{reduction_special}
		Suppose a polytope $M \subset 2\Delta$ has vertices $C_1, C_2, C_3, \dots, C_k$ whose vertex $C_k=A_{ij}$ is special. Then $M$ is minimal if and only if the polytope $M'=\conv\{C_1, \dots, C_{k-1}\}\subset 2\Delta' $ is minimal in the simplex $2\Delta'=2\Delta \cap \{x_i=0\} \cap \{x_j=0\}$.
	\end{lemma}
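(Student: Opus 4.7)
The plan is to reduce both implications to the decomposition
\[ O = \lambda\, A_{ij} + (1-\lambda)\, O', \qquad \lambda \in \{1/n,\, 2/n\}, \]
where $O'$ is the barycenter of the simplex $2\Delta'$, with $\lambda = 1/n$ when $i=j$ and $\lambda=2/n$ otherwise (the two cases arise because $(A_{ij})_i$ equals $2$ or $1$ respectively). The first preliminary is that $M' \subset 2\Delta'$: by the definition of ``special'' (together with the previous lemma, in the case $i\ne j$), no lattice point of $2\Delta$ of the form $A_{il}$ or $A_{lj}$ other than $A_{ij}$ itself lies in $M$, and every lattice point of $2\Delta$ with nonzero $i$-th or $j$-th coordinate has one of these forms. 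Hence each vertex $C_1,\dots,C_{k-1}$ satisfies $x_i = x_j = 0$, so $M'\subset 2\Delta'$. A direct coordinate comparison then proves the displayed identity and the equivalence $O \in M \iff O' \in M'$; for the nontrivial direction, reading off the $i$-th coordinate of any convex representation of $O$ over lattice points of $M$ forces the coefficient of $A_{ij}$ to equal $\lambda$ exactly, and the remaining rescaled combination equals $O'$ in $M'$.

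For the direction ``$M$ minimal $\Rightarrow$ $M'$ minimal'', I would take any lattice polytope $M^\star \subsetneq M'$ with $O' \in M^\star$ and set $N = \conv(M^\star \cup \{A_{ij}\}) \subset M$. Then $O \in N$ by the identity; and for any lattice point $p \in M' \setminus M^\star$, the relation $p = t A_{ij} + (1-t) q$ with $q \in M^\star$ combined with $(p)_i = 0$ forces $t = 0$, yielding $p = q \in M^\star$, a contradiction. So $p \notin N$, giving $N \subsetneq M$ and contradicting the minimality of $M$.

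For the converse, suppose $N \subsetneq M$ is a lattice polytope with $O \in N$. I would first argue that $A_{ij}$ is a vertex of $N$: since $(O)_i > 0$, some vertex of $N$ has positive $i$-th coordinate, and by specialness the only lattice point of $M$ with that property is $A_{ij}$. Then $N \cap 2\Delta'$ is a face of $N$ (since $\{x_i = 0\}$ supports $2\Delta \supset N$, and similarly for $x_j$), hence a lattice polytope contained in $M \cap 2\Delta' = M'$; the same coordinate computation as before extracts $O' \in N \cap 2\Delta'$. If $N \cap 2\Delta' = M'$, then $N \supset \conv(M' \cup \{A_{ij}\}) = M$, contradicting $N \subsetneq M$; so $N \cap 2\Delta'$ is a proper lattice subpolytope of $M'$ containing $O'$, contradicting the minimality of $M'$. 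The main subtlety throughout is maintaining strictness in both directions; specialness of $A_{ij}$ is the essential ingredient that lets the linear functional $x_i$ cleanly separate $A_{ij}$ from all other lattice points of $M$, and this is what transports proper inclusions back and forth between $M$ and $M'$.
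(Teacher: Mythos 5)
Your proof is correct, and its engine is the same coordinate identity the paper computes, namely $O=\tfrac{2}{n}A_{ij}+\tfrac{n-2}{n}O'$ for $i\ne j$ (and $O=\tfrac1n A_{ii}+\tfrac{n-1}{n}O'$ for $i=j$, a case the paper only calls ``analogous''). The surrounding logic, however, is genuinely different. The paper first uses Lemma 3 to know that a minimal polytope is a simplex, writes $O$ as the \emph{unique} convex combination of the vertices, identifies minimality with all these coefficients being nonzero, and transfers ``strictly inside'' between $M$ and $M'$ by the rescaling $\alpha_l\mapsto\tfrac{n}{n-2}\alpha_l$. You never invoke simpliciality or uniqueness of the representation: you work straight from the definition of minimality and transport proper lattice subpolytopes in both directions --- a proper $M^\star\subsetneq M'$ through $O'$ gives the proper subpolytope $\conv(M^\star\cup\{A_{ij}\})\subsetneq M$ through $O$, and a proper $N\subsetneq M$ through $O$ gives the face $N\cap 2\Delta'\subsetneq M'$ through $O'$, with the functional $x_i$ (positive only at $A_{ij}$ among lattice points of $M$, by specialness) pinning the coefficient of $A_{ij}$ to exactly $\lambda$ and keeping all inclusions strict. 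The paper's route is shorter once Lemmas 2--4 are in place; yours is more self-contained and in particular makes explicit the converse direction, which the paper compresses into ``$O$ lies strictly inside the simplex $M$'' (tacitly using that all lattice points of $M$ are vertices and that barycentric coordinates in a simplex are unique). One small caveat: you justify $M'\subset 2\Delta'$ via Lemma 4, which the paper proves only for minimal $M$; in the converse direction, where minimality of $M$ is not yet available, this inclusion should instead be read off the hypothesis that $M'$ is a polytope in $2\Delta'$ (as the paper implicitly does), after which a one-line check shows every lattice point of $M$ other than $A_{ij}$ has $x_i=x_j=0$, so your extraction of $O'$ inside $N\cap 2\Delta'$ goes through.
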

		
	\begin{proof}
		Suppose that $i\ne j$ (the case $i=j$ is analogous). 
		
		First, observe that $M$ is a simplex if and only if $M'$ is a simplex. %That lets us assume both $M$ and $M'$ to be simplices, since Lemma 3 states that a minimal polytope is a simplex.
		
		Let $M\subset 2\Delta$ be a minimal polytope. Lemma 3 implies that $M$ is a simplex, so we may represent $O$ as the \textbf{unique} convex combination of the vertices $C_1, \dots C_k$ as follows: $O=\sum_{l=1}^{k-1} \alpha_l C_l + \alpha_k A_{ij}$ where $\alpha_l\ne 0$ for all $l$ (if $\alpha_l= 0$, then $M$ is not minimal). By Lemma 4 for any $l < k$ the vertices $C_l$ have the $i$-th and $j$-th coordinates equal to 0. Thus, looking at the $i$-th affine coordinate of $O$ we see $\frac{2}{n}=(O)_i=\alpha_k$, so $\sum_{l=1}^{k-1} \alpha_l=\frac{n-2}{n}$.
		
		Now let $O'$ %=\frac{1}{n-2} \left ( \sum_{l=1}^n (A_l) \, -A_i-A_j \right )$ 
		be the barycenter of $2\Delta'$. It is easy to verify that $O'=\sum_{l=1}^{k-1} \frac{n}{n-2}\alpha_l C_l=\sum_{l=1}^{k-1} \beta_l C_l$ by counting all the affine coordinates of $O'$. That is, $O' \in M'$. Since none of the $\beta_l$ equals to 0, we conclude that $O'$ is strictly inside of the simplex $M'$, so $M'$ minimal.
		
		Analogically, if $M'$ is minimal, then  $O'=\sum_{l=1}^{k-1} \beta_l C_l$, so $O=\sum_{l=1}^{k-1} \frac{n-2}{n}\beta_l C_l + \frac{2}{n} A_{ij}$ lies strictly inside of the simplex $M$. %We must prove that $O' \in M'$ but for any $M'' \varsubsetneq M'$ containing $O'$ we have $M''=M'$.$(\sum \frac{n}{n-2} \alpha_l C_l)_p$
		
	\end{proof}

	\begin{proof}[Proof of the theorem \ref{Th2}]

%We claim that for any $n \ge 1$ and for any $n \times n$ matrix for which the properties hold there exists a permutation $\sigma \in S_n$ of indices such that $\tB_{1,\sigma(1)}=\dots=\tB_{n,\sigma(n)}=1$. In other words? there exist $n$ entries of $\tB$ equal to 1 such that none of them lie in the same row or column.
\textbf{Step 1.}

%\begin{lemma}
	Suppose that $M\subset 2\Delta \subset \mathbb R^n $ is a minimal polytope; then there exists a permutation $\sigma \in S_n$ of indices such that $\tB(M)_{1,\sigma(1)}=\dots=\tB(M)_{n,\sigma(n)}=1$. 
	%then there exists a collection $\sigma= \{(i(1),j(1)),\dots, (i(n),j(n))\}$ of $n$ entries of $\tB=\tB(M)$ such that each $\tB_{i(l),j(l)}$ equals to 1 and every two of them lie in diferent rows and columns.
%\end{lemma}

%\begin{proof} 
	%Our aim is to construct a family $\sigma=\{(1,\sigma(1)),\dots,(n,\sigma(n))\}$ of entries of the matrix $\tB(M)$ so that $\tB(M)_{i,\sigma(i)}$ lie in different rows and columns. 
	We will prove this by induction on the number of special vertices of $M$.
	
	\textbf{Base.} Assume that $M$ has no special vertices.  Any row of $\tB(M)$ contains at least one nonzero element by Lemma 1. Though, if the $i$-th row contains exactly one nonzero $\tB(M)_{ij}$ then $A_{ij}\in M$ is unique. Thus, every row contains at least 2 nonzero elements, but  there are at most $2n$ nonzero entries in $\tB(M)$. So, each row (and column) of $\tB(M)$ contains exactly two entries equal to 1.
	
	Consider the graph $G$, the vertices of that are couples $(i,j)$ such that $\tB(M)_{ij}=1$. An edge connects $(i_1,j_1)$ and $(i_2,j_2)$ if and only if $i_1=i_2$ (a vertical edge) or $ j_1=j_2 $ (a horizontal edge). 
	
	The degree of each vertex is two, so the graph consists of several cycles. The edges of the two types alternate in each cycle, so every cycle has an even length. Therefore, we may choose a set of exactly $n$ vertices that are pairwise disjoint. 
	
	We got the set of pairs $\{(i_1,j_1),\dots,(i_n,j_n)\}$. Since the first components of the pairs are distinct as well as the second ones, the desired permutation can be defined as $\sigma(i_l)=j_l$. 
	%(The matrix entries $\tB(M)_{ij}$ corresponding to the vertices lie in different rows an.)
	
	\textbf{Induction step.}
	Let the vertex $A_{ij}$ of $M$ be special. Then $\tB_{ij}=\tB_{ji}=1$. Without loss of generality, we may assume that either $i=j=n$, or $i=n$ and $j=n-1$. Then $M'$ is minimal but has one special vertex less. (Here we use the notation from \ref{reduction_special}). If we erase the $i$-th and $j$-th lines and columns of the matrix $\tB(M)$ we obtain the stencil $\tB(M')$. By the induction hypothesis we have a permutation $\sigma'\in S^{min\{i,j\}-1}$ for that $\tB(M)_{i,\sigma'(i)}=\tB(M')_{i,\sigma'(i)}=1$ holds.
	
	Now \[\sigma(n):=
	\begin{cases}
	\sigma'(n), n \le min\{i,j\}-1;\\
	i, n=j;\\
	j, n=i.\\
	\end{cases}\]
	
%\end{proof}

\textbf{Step 2.}
	Since  $M$ contains $O$, there exists a minimal $M'\subset M$, we have constructed a permutation $\sigma_0$ such that  $\tB(M)_{i,\sigma_0 (i)}=1$ for any $i$.  We want to verify that the polynomial $\det (B)$ is not identically zero on $\mathbb C^M$. Let us look at the expansion
	$\det B=\sum_{\sigma \in S_n} \sign(\sigma)B^\sigma=$
	
	$\sum_{\sigma \in S_n} \sign(\sigma) B_{1,\sigma(1)}B_{2,\sigma(2)} \dots B_{n,\sigma(n)}$. Now we are going to show that for every monomial $B^{\sigma_1}$ that is equal to $B^{\sigma_0}$, we have $\sign (\sigma_0)=\sign (\sigma_1)$. %[$\sign(\sigma_0) \cdot 2^k$, где $k$ -- количество независимых циклов длины больше 2 в $\sigma_0$. ]

	Let us represent $\{1,2, \dots n\}$ as the union of the subsets $E=\{i\in \{1\dots n\}\mid \sigma_0(i)=\sigma_1(i)\}$ and $F=\{1 \dots n\} \setminus E$. Since $B^{\sigma_0}=B^{\sigma_1}$, the sets of \textbf{unordered} pairs $\{\{i,\sigma_0(i)\}\mid i\in 1\dots n\}$ and $\{\{i,\sigma_1(i)\}\mid i\in 1\dots n\}$ are equal. Using the fact that for any $i\in E$ the equality $\{i,\sigma_0(i)\}=\{i,\sigma_1(i)\}$ holds, we obtain that for any $i\in F$ there exists a different from $i$ number $j\in F$ such that $\{i,\sigma_0(i)\}=\{j,\sigma_1(j)\}$.  That is, $\sigma_0(i)=j$ and $\sigma_1(j)=i$, so $\sigma_1(\sigma_0(i))=i$. We have shown that $\sigma_1=\sigma_0^{-1}$ on $F$ and $\sigma_1=\sigma_0$ on $E$. Now let 
	\[ \sigma_2(n):=
	\begin{cases}
	\sigma_1(n), n \in F;\\
	n, n \in E.\\
	\end{cases}\]
	
	Evidently, $\sigma_1=\sigma_0\sigma_2^2$, so $\sign \sigma_1=\sign \sigma_0$. This fact shows that the coefficient of the monomial $B_{1,\sigma_0(1)} \dots B_{n,\sigma_0(n)}$ in the polynomial $\det B$ is nonzero (since it has the same sign as the permutation $\sigma_0$). Thus, $\det B$ is also not an identical zero.
	
	Therefore, generic quadratic form from $\mathbb C^M$ is nondegenerate, since the subvariety $\{\det B=0\} \subset \mathbb C^M$ of degenerate quadratic forms does not coincide with $\mathbb C^M$.
	
\end{proof}

%Comment. Our proof contains 
\subsection{Approach to the Theorem \ref{Th2} Using K\"onig Theorem}

We will present a proof of Theorem \ref{Th2} without using the notion of a minimal polytope with the following

\begin{theorem}[K\"onig]
	Let $B$ be a $n \times n$ matrix consisting of zeroes and ones. Then the following conditions are equivalent:
	\begin {itemize}
	\item[(1)] There exists a permutation $\sigma\in S_n$ such that $B_{1,\sigma(1)}=\dots=B_{n,\sigma(n)}=1$.
	\item[(2)] Suppose that $I$ and $J$ are subsets of $\{1,\dots,n\}$ with the following property:  $(B_{ij}=1)$ $\Rightarrow$ $(i\in I$ or $j\in J)$; then $|I|+|J|\ge n$.
	\end{itemize}
\end{theorem}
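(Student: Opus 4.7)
The plan is to reinterpret $B$ as the bi-adjacency matrix of a bipartite graph $G$ whose two sides $X,Y$ are both copies of $\{1,\dots,n\}$, with $i\in X$ joined to $j\in Y$ iff $B_{ij}=1$. Condition (1) then says that $G$ has a perfect matching, while a pair $(I,J)$ satisfying the hypothesis of (2) is precisely a vertex cover of $G$ of size $|I|+|J|$. So the theorem amounts to: $G$ has a perfect matching iff every vertex cover of $G$ has size at least $n$, and I would deduce this from Hall's marriage theorem.

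The implication (1)$\Rightarrow$(2) is direct. Given a permutation $\sigma$ as in (1) and any admissible pair $(I,J)$, split $\{1,\dots,n\}$ into $E=I$ and $F=\{1,\dots,n\}\setminus I$. For every $i\in F$ the entry $B_{i,\sigma(i)}=1$ together with $i\notin I$ forces $\sigma(i)\in J$; since $\sigma|_F$ is injective, $|J|\ge|F|=n-|I|$, i.e.\ $|I|+|J|\ge n$.

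For (2)$\Rightarrow$(1) I would argue by contrapositive. Suppose no permutation realises (1), so $G$ has no perfect matching; then Hall's marriage theorem supplies a set $S\subseteq X$ of rows with $|N(S)|<|S|$, where $N(S)=\{j\mid B_{ij}=1\text{ for some }i\in S\}$. Put $I=\{1,\dots,n\}\setminus S$ and $J=N(S)$. If $B_{ij}=1$ and $i\notin I$, then $i\in S$ and hence $j\in N(S)=J$, so $(I,J)$ satisfies the hypothesis of (2); but $|I|+|J|=(n-|S|)+|N(S)|<n$, contradicting it.

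The genuine content is Hall's theorem itself, which I would invoke as a black box; its standard induction-on-$n$ proof (splitting on a tight proper subset if one exists, otherwise removing a redundant vertex) is classical. The only thing to watch is a faithful translation between the row/column formulation used here and the bipartite-graph formulation in which Hall's theorem is usually phrased; modulo that, the argument is pure bookkeeping.
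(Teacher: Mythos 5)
Your proof is correct, but note that the paper does not prove this statement at all: K\"onig's theorem is quoted there as a classical result (with only the remark that it is usually stated for bipartite graphs via the adjacency matrix) and is then used as a tool in the alternative proof of Step 1 of Theorem \ref{Th2}. What you supply is the standard derivation of K\"onig's theorem from Hall's marriage theorem. Your direction (1)$\Rightarrow$(2) is the easy counting argument and is accurate: for $i\notin I$ the entry $B_{i,\sigma(i)}=1$ forces $\sigma(i)\in J$, and injectivity of $\sigma$ on the complement of $I$ gives $|J|\ge n-|I|$. Your direction (2)$\Rightarrow$(1) is the classical equivalence: in the balanced bipartite graph a matching saturating the row side is a perfect matching, so absence of a permutation yields, by Hall, a set $S$ of rows with $|N(S)|<|S|$, and the pair $I=\{1,\dots,n\}\setminus S$, $J=N(S)$ is an admissible cover of size less than $n$; this is checked correctly. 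The only caveat is that the genuine combinatorial content is outsourced to Hall's theorem, which you invoke as a black box; since Hall and K\"onig are classically interderivable, this is legitimate but not more elementary --- a self-contained treatment would require including the induction proof of Hall or a direct augmenting-path proof of K\"onig. For the purposes of this paper, citing either classical theorem, as the author does, is equally acceptable.
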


\textbf{Comment.} This theorem is usually formulated in terms of the bipartite graphs with the adjacency matrix $B$.
	%This theorem is often formulated in the termes of the bipartite graphs on $2n$ vertices with $B$ being the incindence matrix.
	
The main statement of Step 1 of the Theorem \ref{Th2} now can be replaced with the following:

\textbf{Version of Step 1.}
\textit{Suppose that $O \in M\subset 2\Delta$; then there exists a permutation $\sigma \in S_n$ of indices such that $\tB(M)_{1,\sigma(1)}=\dots=\tB(M)_{n,\sigma(n)}=1$.}

\begin{proof} 
	We need to verify the property (1) for the matrix $\tB=\tB(M)$. It is enough to verify the property (2).
	
	Let $I$ and $J$ be the sets with the described in K\"onig theorem property. Assume that $|I|+|J| < n$. We prove that $O\notin M$.
	
	Consider the halfspace $\Gamma$ given by the inequality 
	
	\[ \label{Gamma}
	\sum_{l\in I} x_{l} + \sum_{l\in J} x_{l} \ge 2. \eqno{(*)}
	\]
	
	First, we prove that $M\subset \Gamma$. If $A_{ij}$ is a vertex of $M$, then $\tB(M)_{ij}=\tB(M)_{ji}=1$, so using the property (2) we get that $(i\in I$ or $j\in J)$ and $(j\in I$ or $i\in J)$. This is equivalent to the fact that $i,j\in I$, or $i,j\in J$, or $i\in I\cap J$, or $j\in I\cap J$. The fact that $A_{ij} \in \Gamma$ can be verified easily by substituting the coordinates of $A_{ij}$ into ($*$) in each of the four cases. In the first case the first sum in ($*$) is at least 2, in the second case the second sum is at least 2, and in the last two cases both sums are at least 1.
	
	   Note that $O \notin \Gamma$ since $\sum_{l\in J} O_{l} + \sum_{l\in I} O_{l} = \frac{2}{n} \cdot (|I|+|J|) < 2$. Therefore, $O\notin M$.
\end{proof} 

The Step 2 of the proof retains.

\section{An Application to Singularities}

Let $f: \mathbb C^n \to \mathbb C$ be an analytical function. Suppose that $f(0)=0$ and $d_0f=0$, that is, $f$ has a singularity at 0. We derive the following theorem as a consequence of the previous results:
\begin{theorem} \label{Th3}
	If $O \notin N(f)$ then the singularity is not Morse.
	
	If a lattice (infinite) polytope $M\subset \mathbb R^n_{\ge 0}$ contains the point $O$, then a generic $f$ with $d_0f=0$ and $N(f) \subset M$ has a Morse singularity at 0.
\end{theorem}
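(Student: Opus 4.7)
The plan is to prove the two statements of Theorem~\ref{Th3} separately, the first from Theorem~\ref{Th1} and the second from Theorem~\ref{Th2}.

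The first statement follows as the contrapositive of Theorem~\ref{Th1} applied to the quadratic part. Suppose $f$ has a Morse singularity at $0$; then its Hessian at $0$ is nondegenerate, equivalently the quadratic part $Q$ of $f$ (viewed as a symmetric bilinear form) is a nondegenerate quadratic form. Theorem~\ref{Th1} gives $O\in N(Q)$; since the support of $Q$ is included in the support of $f$, we have $N(Q)\subset N(f)$, whence $O\in N(f)$.

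For the second statement, the idea is to reduce to Theorem~\ref{Th2} applied to the lattice polytope $\tilde M := \conv(M\cap 2\Delta\cap\mathbb Z^n)\subset 2\Delta$. The quadratic part of any $f\in\mathbb C^M$ is a quadratic form in $\mathbb C^{\tilde M}$, and for generic $f$ with $d_0 f=0$ the allowed degree-two coefficients $(a_{A_{ij}})_{A_{ij}\in M}$ vary independently, so the quadratic part of $f$ is a generic element of $\mathbb C^{\tilde M}$. Provided $O\in\tilde M$, Theorem~\ref{Th2} gives that such a quadratic form is generically nondegenerate, so the Hessian of a generic such $f$ is nondegenerate and $f$ is Morse at $0$.

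The main obstacle is the sub-lemma that $O\in M$ implies $O\in\tilde M$. To establish it, observe that the constraints $f(0)=0$ and $d_0 f=0$ forbid the lattice points $0$ and $e_1,\dots,e_n$ from the support of $f$, so one may replace $M$ by $\conv(M\cap\mathbb Z^n\setminus\{0,e_1,\dots,e_n\})+\mathbb R^n_{\ge 0}$ without changing the space of admissible $f$; the compact-part vertices of this new polyhedron all have coordinate sum at least $2$. Writing $O=\sum_l \mu_l v_l+\rho$ with $v_l$ these lattice vertices, $\mu_l\ge 0$ summing to $1$, and $\rho\in\mathbb R^n_{\ge 0}$, and summing coordinates yields $2=\sum_l\mu_l\sum_i(v_l)_i+\sum_i\rho_i\ge 2\sum_l\mu_l=2$; equality forces $\sum_i(v_l)_i=2$ for every $l$ with $\mu_l>0$ (so each such $v_l$ equals some $A_{i_l j_l}\in M$) and $\rho=0$, whence $O=\sum_l\mu_l A_{i_l j_l}\in\tilde M$.
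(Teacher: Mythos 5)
Your overall route is the same as the paper's: reduce both claims to the quadratic part $B$ of $f$ and invoke Theorem \ref{Th1} for the first statement and Theorem \ref{Th2} for the second. The first half is correct. For the second half you rightly isolate the point the paper passes over in silence: before Theorem \ref{Th2} (stated for \emph{lattice} polytopes inside $2\Delta$) can be applied, one must know that $O$ lies in $\tilde M=\conv(M\cap 2\Delta\cap\mathbb Z^n)$, not merely in $M\cap 2\Delta$; the paper just asserts that $\det B\ne 0$ for almost every $B\in\mathbb C^{M\cap 2\Delta}$. Your coordinate-sum computation is the right tool for this: if every vertex of the compact part of the polyhedron has coordinate sum at least $2$, then $O=\sum_l\mu_l v_l+\rho$ forces $\rho=0$ and $\sum_i(v_l)_i=2$ whenever $\mu_l>0$, hence $O\in\tilde M$.

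However, the reduction you use to reach that situation has a genuine gap: after replacing $M$ by $M'=\conv(M\cap\mathbb Z^n\setminus\{0,e_1,\dots,e_n\})+\mathbb R^n_{\ge 0}$ (here $e_i$ are the standard basis vectors) you expand $O$ in the vertices of $M'$, but the hypothesis only gives $O\in M$, and discarding the degree~$\le 1$ lattice points can expel $O$ from the polyhedron, so $O\in M'$ is not justified. Concretely, for $n=4$ take $M=\conv\{(1,0,0,0),(0,1,1,1)\}+\mathbb R^4_{\ge 0}$: then $O=(\tfrac12,\tfrac12,\tfrac12,\tfrac12)$ is the midpoint of the two vertices, so $O\in M$, but the only degree-$2$ lattice points of $M$ are $(2,0,0,0),(1,1,0,0),(1,0,1,0),(1,0,0,1)$, so $\tilde M\subset\{x_1\ge 1\}$ and $O\notin\tilde M$; indeed every admissible $f$ here has quadratic part $x_1(ax_1+bx_2+cx_3+dx_4)$, whose Hessian is degenerate, so no such $f$ is Morse. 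Thus your sub-lemma is false as stated --- and this is not only a defect of your argument: the same example shows that the second claim of Theorem \ref{Th3}, read literally, requires the additional (implicit) assumption that $M$ contains none of the points $0,e_1,\dots,e_n$, i.e.\ that $M$ is a Newton polyhedron compatible with $f(0)=0$, $d_0f=0$. Under that assumption your coordinate-sum argument applies to $M$ directly, no replacement is needed, and your proof is complete --- in this respect it is more explicit than the paper's own proof, which relies on the unproved identification $N(B)=N(f)\cap 2\Delta$ and on applying Theorem \ref{Th2} to $M\cap 2\Delta$ without this verification.
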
  

\textbf{Comment.} The exact condition of being \textit{generic} here is that if $B$ is a generic quadratic form from $\mathbb C^{M\cap 2\Delta}$, then \textbf{any} $f=B(\mathbf{x})+o(\mathbf{x}^2)$ has a Morse singularity.

\begin{proof} Let $f=B(\mathbf{x})+o(\mathbf{x}^2)$, where $B$ is a quadratic form. Since $N(B)=N(f) \cap 2\Delta$, it follows that $O\in N(f)$ is equivalent to $O\in N(B)$.  Thus,  if $O\notin N(f)$, then $0=\det B=\Hess(f)$ and $f$ has a Morse singularity. If $O\in M$ then $\det B \ne 0$ for almost every $B\in \mathbb C ^{M \cap 2\Delta}$, that is for almost every $f$ for that $N(f) \subset M$ holds.
\end{proof}

It is interesting to think of this theorem in the context of the following result:

\begin{theorem}[Kouchnirenko, \cite{MilnorNumber}] \label{Kouchnirenko}
	Let $f$ be a generic analytical function of $n$ variables with a singularity at zero where $f(0)=0$. Denote $M=\mathbb Z^n_{\ge 0} \setminus N(f)$. Suppose that $M$ is bounded. Consider the $\binom{i}{n}$ intersections of $M$ and all the $i$-dimensional coordinate subspaces. Denote the sum of their $i$-dimensional volumes as $V_i$. Then the Milnor number of $f$ can be counted as $\mu (f)=n!\cdot V_n-(n-1)!\cdot V_{n-1}+\dots+(-1)^{n-1}V_1+(-1)^n$.
	
	The statement of the theorem holds for the functions $f$ such that for any face $\varGamma$ of $N(f)$ there are no points $x\in(\mathbb C\setminus \{0\})^n$ for which $f^\varGamma(x)=0$ and $df^\varGamma(x)=0$. Here $f^\varGamma$ stands for the sum of all monomials in $f$ corresponding to the lattice points in $\varGamma$.
\end{theorem}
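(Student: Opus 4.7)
The plan is to express the Milnor number as an intersection number in $(\mathbb C^*)^n$ computed by the Bernstein--Kouchnirenko theorem (BKK), and then perform inclusion--exclusion over coordinate subspaces to recover the alternating volume formula.

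First, recall that $\mu(f)=\dim_{\mathbb C}\mathcal O_{\mathbb C^n,0}/(\partial_1 f,\ldots,\partial_n f)$, which by standard deformation theory counts the number of simple (Morse) critical points, in a small neighborhood of $0$, of a generic linear perturbation $f_\varepsilon=f+\sum_i \varepsilon_i z_i$. Equivalently, $\mu(f)$ is the local intersection multiplicity at $0$ of the $n$ hypersurfaces $\{\partial_i f_\varepsilon=0\}$.

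Second, I would apply BKK to the Laurent polynomials $\partial_i f_\varepsilon$. Their Newton polytopes are $P_i=\conv(\{0\}\cup(N(f)-e_i))$, and the face-nondegeneracy hypothesis on $f$ propagates: on each face $\Gamma$ of $N(f)$, the face polynomials $(\partial_i f_\varepsilon)^{\Gamma-e_i}$ inherit the condition that they and their differentials have no common zero on $(\mathbb C\setminus\{0\})^n$, so the tuple $(\partial_i f_\varepsilon)$ is jointly Bernstein-nondegenerate for generic $\varepsilon$. Then BKK computes the total number of torus solutions as $n!\cdot\mathrm{MV}(P_1,\ldots,P_n)$.

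Third, to extract the near-origin contribution from the global torus count, stratify $\mathbb C^n$ by the vanishing patterns of coordinates and apply BKK inductively to the restriction of $f_\varepsilon$ (or rather of its Jacobian system) to each coordinate stratum, treated as a lower-dimensional torus. Inclusion--exclusion over these strata isolates $\mu(f)$ as an alternating sum of mixed volumes of the Jacobian polytopes restricted to $k$-dimensional coordinate subspaces. A Minkowski-expansion (polarization) calculation identifies each such mixed volume with the volume $V_k$ of $M\cap\{\text{coordinate subspace}\}$, yielding exactly $\mu(f)=n!V_n-(n-1)!V_{n-1}+\cdots+(-1)^{n-1}V_1+(-1)^n$; the trailing $(-1)^n$ accounts for the unique solution that collapses to the origin when $\varepsilon\to 0$.

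The main obstacle is the bookkeeping in the third step: one must verify that the face-nondegeneracy condition is precisely what forbids critical points of $f_\varepsilon$ from escaping to toric infinity along coordinate directions, so that BKK counts everything exactly rather than merely bounding, and that the same hypothesis guarantees each coordinate-face restriction $f^\Gamma$ is itself nondegenerate, so the inductive descent closes. Once this is set up, the passage from the BKK mixed-volume expression to the alternating formula in the $V_i$ is a purely combinatorial manipulation of Minkowski sums using the relation $P_i=\conv(\{0\}\cup(N(f)-e_i))$ and the fact that $V_k$ is the coordinate-plane volume of the complement $M=\mathbb Z^n_{\ge 0}\setminus N(f)$.
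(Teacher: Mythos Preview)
The paper does not prove this theorem at all: it is quoted as a classical result of Kouchnirenko, with a citation to \cite{MilnorNumber}, and is included only for context and to motivate Comment~1 following Theorem~\ref{Th3}. There is therefore no ``paper's own proof'' to compare your proposal against.

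As for the proposal itself, your outline is in the right spirit---Kouchnirenko's original argument does proceed by analyzing the Jacobian ideal via toric/Newton-polyhedral methods and an inclusion--exclusion over coordinate strata---but what you have written is a sketch rather than a proof. In particular, the ``bookkeeping'' you flag in the third step is the heart of the matter: showing that the face-nondegeneracy hypothesis controls escape to infinity and that the mixed-volume expression collapses to the alternating sum in the $V_i$ requires real work (in Kouchnirenko's paper, a careful filtration of the local ring and a computation of the associated graded), not just a polarization identity. Your claim that each BKK mixed volume ``identifies with'' the coordinate-plane volume $V_k$ is asserted, not argued. If you want to pursue this route, you should consult Kouchnirenko's original paper or the treatment via toric compactifications in, e.g., Oka or the Arnold--Gusein-Zade--Varchenko book, rather than try to reconstruct it from BKK alone.
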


\textbf{Comment 1.} One might expect that Theorem \ref{Th3} could be proved using the Kouchnirenko Theorem subsituting $\mu=1$. However, the author does not know how do that and it would be interesting to obtain such a proof. Possibly, that proof could be much more difficult since the Kouchnirenko Theorem does not give a positive formula for $\mu$.

\textbf{Comment 2.} The statement of Theorem \ref{Th3} is valid for all functions $f$ that are generic in the sense of Kouchnirenko Theorem.

%\printbibliography

\end{document}